\def\imod#1{\allowbreak\mkern10mu({\operator@font mod}\,\,#1)}
\theoremstyle{plain}
\newtheorem{thm}{Theorem}
\theoremstyle{definition}
\newtheorem{defi}[thm]{Definition}
\newtheorem{cor}[thm]{Corollary}
\newtheorem*{cor*}{Corollary}
\newtheorem*{lem*}{Lemma}
\newtheorem*{thm*}{Theorem}
\newtheorem*{clm*}{Claim}
\newtheorem{lem}[thm]{Lemma}
\newtheorem{clm}[thm]{Claim}
\newtheorem{rem}[thm]{Remark}
\DeclareMathOperator{\id}{id}
\DeclareMathOperator{\fil}{fil}
\DeclareMathOperator{\FF}{FF}
\renewcommand{\id}{\mathrm{id}}
\newcommand{\bbP}{\mathbb{P}}
\newcommand{\bbQ}{\mathbb{Q}}
\renewcommand{\phi}{\varphi}
\newcommand{\calA}{\mathcal{A}}
\newcommand{\calB}{\mathcal{B}}
\newcommand{\calC}{\mathcal{C}}
\newcommand{\calE}{\ensuremath{\mathcal{E}}}
\newcommand{\calF}{\mathcal{F}}
\newcommand{\calH}{\ensuremath{\mathcal{H}}}
\newcommand{\ra}{\rangle}
\newcommand{\concatB}{\mathbin{\rotatebox[origin=c]{90}{\scalebox{.7}{(\kern1ex)}}}}
\date{\today}
\begin{document}

\title[Selective independence]{Selective independence} 

\author{Vera Fischer}

\address{Institute of Mathematics, University of Vienna, Augasse 2-6, 1090 Vienna, Austria}
\email{vera.fischer@univie.ac.at}

\thanks{\emph{Acknowledgements:} The author would like to thank the Austrian Science Fund (FWF) for the generous support through grant number Y1012-N35.}

\subjclass[2000]{03E17, 03E35, 03E15}


\keywords{Cardinal characteristics; independent families; almost disjoint families; definability; projective well-orders of the reals}


\begin{abstract} 
Let $\mathfrak{i}$ denote the minimal cardinality of a maximal independent family and let $\mathfrak{a}_T$ denote the minimal cardinality of a maximal family of pairwise almost disjoint subtrees of $2^{<\omega}$. Using a countable support iteration of proper, $^\omega\omega$-bounding posets of length $\omega_2$ over a model of CH, we show that consistently $\mathfrak{i}<\mathfrak{a}_T$. Moreover, 
we show that the inequality can be witnessed by a co-analytic maximal independent family of size $\aleph_1$ in the presence of a $\Delta^1_3$ definable well-order of the reals. The main result of the paper can be viewed as a partial answer towards the well-known open problem of the consistency of $\mathfrak{i}<\mathfrak{a}$.
\end{abstract}

\maketitle

\section{Introduction}

In this paper, we will study two infinitary combinatorial structures of the real line, independent and almost disjoint families. A family $\mathcal{A}$ of $[\omega]^\omega$ is said to be independent if for every two disjoint finite non-empty subfamilies $\mathcal{B}$ and $\mathcal{C}$ of $\mathcal{A}$ the set $\bigcap\calB\backslash\bigcup\calC$ is infinite. It is maximal independent if it is in addition maximal under inclusion. The minimal cardinality of a maximal independent family is denoted $\mathfrak{i}$. Another, well-known cardinal characteristic of the real line is the almost disjointness number, denoted $\mathfrak{a}$ and defined as the minimal cardinality of a maximal (under inclusion) infinite family of pairwise disjoint infinite subsets of $\omega$. 

The consistency  of $\mathfrak{a}<\mathfrak{i}$ is well-known, as it holds in the Cohen model.  The consistency of $\mathfrak{i}<\mathfrak{a}$ is however a long-standing open problem. Since $\mathfrak{d}\leq\mathfrak{i}$ (see~\cite{LH}), a model of $\mathfrak{i}<\mathfrak{a}$ is necessarily a model of $\mathfrak{d}<\mathfrak{a}$. However, in all known models of $\mathfrak{d}<\mathfrak{a}$ (using ultrapowers, or Shelah's template construction,~\cite{SS0}) the value of $\mathfrak{i}$ coincides with the value of $\mathfrak{a}$. For more recent studies on the set of possible 
cardinalities of maximal independent families, see~\cite{VFSS1, VFSS2}.

Building upon earlier work ~\cite{VFDM1, JBVFYK}, we define a class of independent families (originally appearing in~\cite{SS1}), to which we refer as selective independent families (see Definition~\ref{selective.independence.def}). Selective independent families exist under CH (shown originally in~\cite{SS1} and later~\cite{VFDM1}). Our results show that their existence is also consistent with the negation of CH. Moreover, we show that selective independent families are preserved under the countable support iteration of perfect trees posets (see Definition~\ref{perfect.tree.poset} and Theorem~\ref{preservation.thm}). 

A not that well-known relative of the almost disjointness number is the number $\mathfrak{a}_T$, which is defined as the minimal cardinality of a maximal family of pairwise disjoint subtrees of $2^{<\omega}$. Relying on our preservation theorem, we establish the following:

\begin{thm*} Assume CH. It is relatively consistent that $\mathfrak{i}<\mathfrak{a}_T$.
\end{thm*}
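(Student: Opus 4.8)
The plan is to perform a countable support iteration $\langle \bbP_\al, \dot{\bbQ}_\al : \al < \om_2\rangle$ of length $\om_2$ over the ground model $V \models \CH$, in which every iterand is a proper, ${}^\om\om$-bounding perfect tree poset in the sense of Definition~\ref{perfect.tree.poset}, arranged so as to push $\fra_T$ up to $\aleph_2$ while a fixed ground-model selective independent family pins $\fri$ at $\aleph_1$. First I would fix, using $\CH$ in $V$ together with the existence results cited in the introduction, a selective independent family $\calA \in V$ of size $\aleph_1$ as in Definition~\ref{selective.independence.def}. Since each iterand will have size at most $\frc = \aleph_1$ at the stage where it is invoked, the whole iteration is proper and satisfies the $\aleph_2$-c.c.; consequently $\CH$ holds in every intermediate model $V[G_\al]$ with $\al<\om_2$, the final model satisfies $\frc = \aleph_2$, and, as $\mathrm{cf}(\om_2)>\aleph_1$, every set of reals of size $\aleph_1$ in $V[G_{\om_2}]$ already appears in some $V[G_\al]$ with $\al<\om_2$.

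The bound $\fri \leq \aleph_1$ is then immediate from the preservation machinery. Because the iteration is assembled entirely from perfect tree posets, Theorem~\ref{preservation.thm} applies at every stage and guarantees that $\calA$ remains a maximal independent family in $V[G_{\om_2}]$. Since $|\calA| = \aleph_1$ and every maximal independent family is uncountable, this yields $\fri = \aleph_1$ in the final model.

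For $\fra_T$ the heart of the matter is the choice of the iterands. Using a bookkeeping function $h\colon \om_2 \to \om_2$ (or $\diamondsuit_{\om_2}$) I would enumerate all names for maximal almost disjoint families of subtrees of $2^{<\om}$ of size $\aleph_1$ occurring in the intermediate models, and at stage $\al$ let $\dot{\bbQ}_\al$ be a perfect tree poset that generically adds a subtree of $2^{<\om}$ almost disjoint from every member of the family $\dot{\calT}_\al$ handed over by $h$, thereby destroying its maximality. The forcing $\dot{\bbQ}_\al$ must be designed to be simultaneously (i) a perfect tree poset, so that Theorem~\ref{preservation.thm} continues to apply, (ii) proper and ${}^\om\om$-bounding, and (iii) genuinely diagonalizing, meaning the generic tree it adds is almost disjoint from $\dot{\calT}_\al$ in a way that is not undone by the remaining forcing. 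By the reflection property noted above, this bookkeeping ensures that no almost disjoint family of subtrees of size $\aleph_1$ is maximal in $V[G_{\om_2}]$, whence $\fra_T \geq \aleph_2$; combined with $\frc = \aleph_2$ this gives $\fra_T = \aleph_2 > \aleph_1 = \fri$.

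The main obstacle is precisely requirement (iii) in conjunction with (i): one must exhibit a single forcing notion that both falls under the perfect-tree-poset framework of Definition~\ref{perfect.tree.poset}, and hence is covered by the preservation theorem, and permanently destroys the maximality of a prescribed almost disjoint family of trees. Verifying that the almost-disjointness of the newly introduced tree survives the tail of the iteration, rather than merely holding at the stage it is added, is the delicate step, and is the tree-theoretic counterpart of the well-known difficulty in controlling the almost disjointness number $\fra$ under iterated forcing. Confirming that this diagonalizing poset is ${}^\om\om$-bounding and conforms to the fusion/Axiom~A structure shared by the perfect tree posets constitutes the remaining technical work.
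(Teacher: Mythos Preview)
Your outline is exactly the paper's strategy, but you leave as an ``obstacle'' precisely the step the paper carries out, so as written the proposal is incomplete. The missing ingredient is the choice of iterand. Rather than directly adding a subtree almost disjoint from a given family $\calT_\al$ of trees, the paper uses Miller's reformulation: $\mathfrak{a}_T$ equals the least size of a partition of $2^\omega$ into compact sets. Given such a partition $\calC=\{C_\alpha:\alpha<\omega_1\}$, Miller's poset $\mathbb{P}(\calC)$ consists of all perfect trees $p\subseteq 2^{<\omega}$ such that each $C_\alpha\cap[p]$ is nowhere dense in $[p]$, ordered by inclusion. This is literally a poset of perfect subtrees of $2^{<\omega}$ and satisfies Axiom~A, so it is a perfect tree poset in the sense of Definition~\ref{perfect.tree.poset} and falls squarely under Theorem~\ref{preservation.thm}. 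The bookkeeping then enumerates all $\aleph_1$-sized compact partitions rather than tree families.

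This choice also dissolves your worry about item (iii). The $\mathbb{P}(\calC)$-generic branch $x\in 2^\omega$ avoids every $C_\alpha$; since each $C_\alpha$ is closed and coded in the intermediate model, ``$x\notin C_\alpha$'' is witnessed by a finite initial segment of $x$ and is therefore upward absolute through the tail of the iteration. Hence $\calC$ is permanently destroyed as a partition, with no delicate preservation-of-almost-disjointness argument needed. Everything else in your write-up (the $\aleph_2$-c.c.\ and reflection of size-$\aleph_1$ families, and the application of Theorem~\ref{preservation.thm} to keep $\calA$ maximal) is correct.
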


The above generic extension is a model of $\mathfrak{d}=\omega_1<\mathfrak{a}_T=\omega_2$, result which was first obtained by O. Spinas in~\cite{OS}.

Elaborating on Miller's technique from~\cite{AMiller} and introducing the notion of an indestructibility tower, in~\cite{JBVFYK} the authors construct a co-analytic Sacks indestructible maximal independent family. The family is in fact selective independent and thus subject to the more general preservation theorem developed in this paper. In~\cite{VFSF} the method of coding with perfect trees (and localization) is used to show that certain cardinal invariant constellations are consistent with the existence of a $\Delta^1_3$ definable wellorder of the reals. Both coding with perfect trees and localization fall in the framework of our general preservation theorem, leading to the following result:

\begin{thm*} The inequality $\mathfrak{i}<\mathfrak{a}_T$ is consistent with the existence of a $\Delta^1_3$-wellorder of the reals. Additionally, the independence number can be witnessed by a co-analytic maximal independent family.
\end{thm*}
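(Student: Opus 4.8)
The plan is to construct the model by a countable support iteration over a carefully chosen ground model, arranging three tasks at once: preserving a selective independent family (to pin $\mathfrak{i}=\aleph_1$), diagonalizing against small maximal almost disjoint families of subtrees (to push $\mathfrak{a}_T=\aleph_2$), and coding a global well-order into the reals (to obtain the $\Delta^1_3$-definition). The entire construction must remain inside the class of proper, $^\omega\omega$-bounding perfect tree posets so that the preservation Theorem~\ref{preservation.thm} applies at successor and limit steps.

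First I would fix the ground model to be $L$ (or a mild forcing extension of $L$ carrying a $\Sigma^1_2$-good well-order of the reals) and, following Miller's technique from~\cite{AMiller} together with the indestructibility tower of~\cite{JBVFYK}, build by a recursion of length $\omega_1$ a selective independent family $\calA=\{A_\alpha:\alpha<\omega_1\}$ that is simultaneously (i) co-analytic, its $\Pi^1_1$ definition referring only to countable levels of the ground model and using the ground-model well-order as a parameter, and (ii) indestructible, i.e. selective in the sense of Definition~\ref{selective.independence.def} and remaining so after each iterand used below. The bookkeeping in this recursion must anticipate the generic reals added later and seal the diagonalization requirements that guarantee selectivity, exactly as in the Sacks-indestructible construction of~\cite{JBVFYK}, but now against the larger class of perfect-tree iterands.

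Second I would design the iteration $\langle\bbP_\alpha,\dot{\bbQ}_\alpha:\alpha<\omega_2\rangle$ so that a fixed bookkeeping function distributes the stages among (a) Spinas-type tree forcing~\cite{OS}, which generically adds a subtree almost disjoint from every member of a prescribed $\aleph_1$-sized family of subtrees, thereby witnessing its non-maximality, and (b) the perfect-tree coding-and-localization forcing of~\cite{VFSF}, which writes one bit of the global well-order into a designated coding area. Since $2^{\aleph_0}=\aleph_2$ is maintained and the iteration is proper of length $\omega_2$ over a model of CH, hence $\aleph_2$-c.c., standard reflection and bookkeeping catch every $\aleph_1$-family of subtrees appearing in the final model, forcing $\mathfrak{a}_T=\aleph_2$. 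The crucial point is that both (a) and (b) are instances of Definition~\ref{perfect.tree.poset} and are proper and $^\omega\omega$-bounding, so that $\calA$ stays selective, and hence maximal, at every stage; thus $\mathfrak{i}=\aleph_1$ in $V^{\bbP_{\omega_2}}$.

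Finally I would read off the invariants and definitions in the extension. The family $\calA$ remains co-analytic because its Miller-style $\Pi^1_1$ definition, referring only to countable ground-model levels, defines no new reals after the proper iteration, witnessing $\mathfrak{i}=\aleph_1$; the diagonalization yields $\mathfrak{a}_T=\aleph_2$, hence $\mathfrak{i}<\mathfrak{a}_T$; and the coded predicate gives a $\Delta^1_3$ well-order by the usual localization-absoluteness argument, a real coding an initial segment being equivalent both to the existence of a $\Sigma^1_2$-witness for the coding and, symmetrically, to the nonexistence of one for its complement. The main obstacle I expect is compatibility: one must verify that Spinas's forcing and the coding-and-localization forcing genuinely fit Definition~\ref{perfect.tree.poset} and do not disturb the selectivity of $\calA$ — in particular that the bits written by (b) and the generic subtrees added by (a) avoid the integer patterns sealed in the $\Pi^1_1$ construction of $\calA$ — so that all three requirements can be met inside a single iteration satisfying the hypotheses of Theorem~\ref{preservation.thm}.
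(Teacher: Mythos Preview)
Your overall architecture is the same as the paper's: work over $L$, fix the co-analytic selective independent family coming from the indestructibility tower of~\cite{JBVFYK}, and interleave the $\mathfrak{a}_T$-diagonalization with the coding machinery of~\cite{VFSF}. The $\mathfrak{a}_T$ side is fine; Miller's poset $\bbP(\calC)$ (equivalently Spinas's forcing) is indeed a perfect tree poset in the sense of Definition~\ref{perfect.tree.poset}.

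The gap is in how you treat the~\cite{VFSF} side. You describe item (b) as ``the perfect-tree coding-and-localization forcing'' and then assert that all iterands are perfect tree posets, so that Theorem~\ref{preservation.thm} applies verbatim. That is not correct. The construction in~\cite{VFSF} has \emph{three} kinds of iterands, not two: besides coding with perfect trees and the localization posets, there are the iterands that shoot clubs through chosen stationary, co-stationary subsets of $\omega_1$. These club-shooting posets are $\omega$-distributive and $S$-proper for a fixed stationary co-stationary $S$, but they are \emph{not} proper and certainly not perfect tree posets. Consequently the iteration is only $S$-proper, and Theorem~\ref{preservation.thm}, whose hypotheses require a countable support iteration of perfect tree posets (in particular proper), does not apply as stated. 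What the paper actually does (see the proof of Theorem~\ref{selective.order}) is observe that Lemmas~\ref{single.step.density} and~\ref{ideal.generators} extend to coding with perfect trees, that $\fil(\calA)$ stays generated by its ground-model part so remains a $p$-set, that $^\omega\omega$-bounding is preserved along $S$-proper iterations so $\fil(\calA)$ remains a $q$-set, and that both the successor-stage argument of Theorem~\ref{preservation.thm} and the limit-stage Theorem~\ref{preservation.limit} (via~\cite[Lemma 3.2]{SS1}) go through for $S$-proper, $^\omega\omega$-bounding iterations. You need to make exactly this move; without it your step ``all iterands satisfy the hypotheses of Theorem~\ref{preservation.thm}'' fails.
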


Note that both, the $\Delta^1_3$-definable wellorder of the reals, as well as the $\Pi^1_1$ definable maximal independent family, are maximal in their projective complexity. Indeed, Miller~\cite{AMiller} has shown that there are no analytic maximal independent families, while the existence of a $\Sigma^1_2$ definable wellorder of the reals implies by a theorem of Mansfield that every real is constructible (see also~\cite{VFSF}).

{\emph{Outline of the paper:}} In Section 2 we study equivalent formulations of dense maximality and the density ideal. We show that in iterations of perfect set posets every element in the density ideal contains a ground model set from the density ideal. In Section 3 we introduce the notion of selective independent families and establish our main preservation theorem. Sections 4 considers applications to compact partitions of the Baire space and the number $\mathfrak{a}_T$. Section 5 contains applications to models with projective wellorders of the reals. 
Section 6 contains concluding remarks and open questions.

\section{Dense Maximality}

\begin{defi} Let $\calA$ be an infinite independent family. Then:
	\begin{enumerate}
		\item The density ideal of $\calA$, denoted $\id(\calA)$ consists of all $X\subseteq \omega$ with the property that $\forall h\in\FF(\calA)$ there is  $h'\in \FF(\calA)$ such that $h'\supseteq h$ and $\calA^{h'}\cap X=\emptyset$.
		\item The density filter of $\calA$, denoted $\fil(\calA)$, is the dual filter of $\id(\calA)$. Thus $Y\in \fil(\calA)$ if and only if $\forall h\in\FF(\calA)\exists h'\in\FF(\calA)$ such that $h'\supseteq h$ and $\calA^{h'}\subseteq Y$.	
	\end{enumerate}
\end{defi}

\begin{lem}\label{dense.max.equiv} Let $\calA$ be an infinite independent family. The following are equivalent:
	\begin{enumerate}
		\item For all $X\in\mathcal{P}(\omega)\backslash \calA$ and all $h\in\FF(\calA)$ there is an extension $h'$ of  $h$ such that $\calA^{h'}\cap X$ or $\calA^{h'}\backslash X$ is finite (and so empty).
		\item For all $h\in\FF(\calA)$ and all $X\subseteq \calA^h$ either there is $B\in\id(\calA)$ such that $\calA^h\backslash X\subseteq B$, or there is $h'\in\FF(\calA)$ such that $h'\supseteq h$ and $\calA^{h'}\subseteq \calA^h\backslash X$.
		\item For each $X\in\mathcal{P}(\omega)\backslash\fil(\calA)$ there is $h\in\FF(\calA)$ such that $X\subseteq \omega\backslash \calA^h$.  
   \end{enumerate}		
\end{lem}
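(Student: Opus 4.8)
The plan is to prove the three statements equivalent cyclically, via $(1)\Rightarrow(2)\Rightarrow(3)\Rightarrow(1)$. Before starting I would record two routine reformulations that streamline everything. First, $\id(\calA)$ is genuinely an ideal: it is downward closed, and closed under finite unions (given $X_1,X_2\in\id(\calA)$ and $h$, refine $h$ to miss $X_1$ and then refine again to miss $X_2$). Consequently the first alternative of (2) is equivalent to the single clause $\calA^h\setminus X\in\id(\calA)$, and by duality $Y\in\fil(\calA)$ iff $\omega\setminus Y\in\id(\calA)$. Second, I would note the ``finite (and so empty)'' upgrade implicit in (1): if a refinement $h'$ of $h$ makes $\calA^{h'}\cap X$ finite, then choosing one fresh member of $\calA$ for each of its finitely many points and taking the side that excludes that point yields $h''\supseteq h'$ with $\calA^{h''}\cap X=\emptyset$ (independence keeps $\calA^{h''}$ infinite). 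Thus I may freely use (1) in the sharp form: for all $X$ and $h$ there is $h'\supseteq h$ with $\calA^{h'}\cap X=\emptyset$ or $\calA^{h'}\subseteq X$.

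For $(1)\Rightarrow(2)$, fix $h$ and $X\subseteq\calA^h$. If some $h'\supseteq h$ satisfies $\calA^{h'}\cap X=\emptyset$, then $\calA^{h'}\subseteq\calA^h\setminus X$ and the second alternative holds. Otherwise every refinement of $h$ meets $X$, and I would show $\calA^h\setminus X\in\id(\calA)$ straight from the definition: given any condition $g$, it suffices (the incompatible case being trivial) to treat $g\supseteq h$, and applying the sharp form of (1) to $X$ and $g$ now excludes the option $\calA^{g'}\cap X=\emptyset$, forcing $\calA^{g'}\subseteq X$ and hence $\calA^{g'}\cap(\calA^h\setminus X)=\emptyset$. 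For $(2)\Rightarrow(3)$, take $X\notin\fil(\calA)$ and fix $h_0$ witnessing this, i.e. with no refinement of $h_0$ contained in $X$. Applying (2) to $h_0$ and $X\cap\calA^{h_0}$, the ideal alternative $\calA^{h_0}\setminus X\in\id(\calA)$ would produce (refining $h_0$) some $h'\supseteq h_0$ with $\calA^{h'}\subseteq X$, contradicting the choice of $h_0$; so the refinement alternative holds and yields the required $h$ with $\calA^h\cap X=\emptyset$.

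The implication $(3)\Rightarrow(1)$ is where the real work lies, and the obstacle is a compatibility mismatch: hypothesis (3) returns a Boolean piece $\calA^g$ globally disjoint from a given set, but (1) demands a piece refining the prescribed $h$, and the $g$ supplied by (3) need not even be compatible with $h$. Fixing $X$ and $h$, set $Y=X\cap\calA^h$; if some $h'\supseteq h$ has $\calA^{h'}\subseteq Y\subseteq X$ the second alternative of (1) holds. Otherwise I would apply (3) not to $X$ but to the doctored set $X^*=Y\cup(\omega\setminus\calA^h)$. Since no refinement of $h$ lies in $Y$, none lies in $X^*$, so $X^*\notin\fil(\calA)$ and (3) provides $g$ with $\calA^g\cap X^*=\emptyset$; this simultaneously forces $\calA^g\subseteq\calA^h$ and $\calA^g\cap X=\emptyset$. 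The decisive point is that $\calA^g\subseteq\calA^h$ upgrades to $g\supseteq h$: since every atom is infinite, $\calA^g\subseteq\calA^h\subseteq A$ for $A\in\dom(h)$ with $h(A)=1$ forces $A\in\dom(g)$ with $g(A)=1$ (otherwise independence would split $\calA^g$ across $A$), and symmetrically when $h(A)=0$. Hence $g$ genuinely refines $h$ and witnesses the first alternative of (1), closing the cycle. I expect the bookkeeping around compatibility of conditions and the ``finite to empty'' passage to be the only delicate points; the $X^*$ device is what removes the compatibility obstruction cleanly.
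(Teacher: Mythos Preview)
Your proof is correct. The paper itself does not give an argument: it simply cites \cite[Lemma~31]{VFDM2} for $(1)\Leftrightarrow(2)$ and \cite[Theorem~29]{VFDM1} together with \cite{SS1} for $(2)\Leftrightarrow(3)$, whereas you supply a fully self-contained cyclic proof $(1)\Rightarrow(2)\Rightarrow(3)\Rightarrow(1)$.

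Two small comments. In $(1)\Rightarrow(2)$, when you reduce to the case ``$g\supseteq h$'' you are implicitly passing through $g\cup h$ for compatible $g$; this is fine but worth making explicit. Also, statement~(1) is phrased only for $X\in\mathcal{P}(\omega)\setminus\calA$, so in $(1)\Rightarrow(2)$ you should note that the excluded case $X\in\calA$ with $X\subseteq\calA^h$ forces $X\in\dom(h)$ with $h(X)=1$ (by independence), whence $\calA^h\setminus X=\emptyset\in\id(\calA)$ trivially. Your $X^*$ trick in $(3)\Rightarrow(1)$, together with the observation that $\calA^g\subseteq\calA^h$ upgrades to $g\supseteq h$ via independence, is exactly the right way to handle the compatibility issue and is the substantive step.
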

\begin{proof}
The equivalence of $(1)$ and $(2)$ can be found in~\cite[Lemma 31]{VFDM2}. The equivalence of $(2)$ and $(3)$ implicitly appears in~\cite[Theorem 29]{VFDM1}, as well as~\cite{SS1}
\end{proof}

\begin{defi} An independent family $\calA$ is said to be densely maximal if any of the above three equivalent characterisations hold.
\end{defi}

We agree to use the following terminology: 
	
\begin{rem}\label{perfect.tree.poset} A partial order is said to be a perfect tree poset if its elements are perfect subtrees of $2^{<\omega}$ and satisfies Axiom A.
\end{rem}

Thus perfect tree posets are proper, $^\omega\omega$-bounding and have the Sacks property. The density ideal $\id(\calA)$ associated to an arbitrary (infinite) independent family has the following property:

\begin{lem}\label{single.step.density} Assume CH. Let $\calA$ be an independent family, let $\bbQ$ be a perfect tree poset and let $H$ be a $\bbQ$-generic filter. Then
$\id(\calA)^{V[H]}$ is generated by $\id(\calA)\cap V$, where $V$ denotes the ground model. With other words for each $X\in\id(\calA)\cap V[H]$ there is $Y\in\id(\calA)\cap V$ such that $X\subseteq Y$. 
\end{lem}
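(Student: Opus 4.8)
The plan is to prove the statement by showing that the set of conditions $T\le T_0$ for which there is some $Y\in\id(\calA)\cap V$ with $T\forces\dot X\subseteq\check Y$ is dense below any $T_0$ forcing $\dot X\in\id(\calA)$; genericity then produces such a $Y$ for the actual $X=\dot X[H]$. First I would record one soft but essential point. Since every $h\in\FF(\calA)$ is a finite partial function whose domain is a finite subset of $\calA\subseteq V$, we have $\FF(\calA)^{V[H]}=\FF(\calA)^{V}$, and each $\calA^{h}$ is a ground model set. Consequently membership of a ground model set in $\id(\calA)$ is absolute between $V$ and $V[H]$, and the witnessing extensions $h'$ in the definition of $\id(\calA)$ always already lie in $V$. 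This is what makes it conceivable to capture the density information of the generic $X$ inside the ground model.

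Next I would use that $\bbQ$ is proper, ${}^\omega\omega$-bounding and has the Sacks property, so that along an Axiom A fusion the name $\dot X$ is read continuously: there is $T\le T_0$ and a ground model continuous map $\Phi\colon[T]\to 2^{\omega}$ with $T\forces\chi_{\dot X}=\Phi(\dot g)$, arranged so that $\dot X\cap\ell_n$ is decided at the $n$-th splitting level and the branching is finite at each level. Writing $X_x=\{k:\Phi(x)(k)=1\}$ for $x\in[T]$ and putting $Y=\bigcup_{x\in[T]}X_x$, the set $Y$ lies in $V$ and satisfies $X\subseteq Y$. Thus the whole lemma reduces to the single assertion that $Y\in\id(\calA)$; equivalently, that the generic dense subset $\{h'\in\FF(\calA):\calA^{h'}\cap\dot X=\emptyset\}$ of the poset $(\FF(\calA),\supseteq)$ contains a ground model dense subset.

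To attack $Y\in\id(\calA)$ I would fix $h\in\FF(\calA)$ and exploit $T\forces\dot X\in\id(\calA)$, which yields $T\forces\exists h'\supseteq h\,(\calA^{h'}\cap\dot X=\emptyset)$. Refining $T$ to some $T'\le T$ and reading a name $\dot{h'}$ for such a witness as a continuous function of $\dot g$, and using that $\FF(\calA)$ carries the discrete topology while $[T']$ is compact, this witness takes only finitely many values $g_0,\dots,g_{p-1}$, each extending $h$, along a clopen partition $[T']=\bigsqcup_{j<p}[T_j]$, with $\calA^{g_j}$ disjoint from the subspread $\bigcup_{x\in[T_j]}X_x$. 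Since $\id(\calA)$ is closed under finite unions (extend $h$ successively through $g_0,g_1,\dots$, each further extension shrinking $\calA^{\,\cdot}$ and hence preserving earlier disjointnesses), this would reduce $Y\in\id(\calA)$ to the membership in $\id(\calA)$ of the finitely many subspreads produced at each stage.

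The \textbf{main obstacle} is precisely that a single refinement $T'\le T$ only controls the branches through $T'$: the branches in $[T]\setminus[T']$ contribute to $Y$ but are left untouched, so no one ground model $h'$ obtained this way is yet disjoint from the \emph{full} spread, and the natural recursion ``$Y\in\id(\calA)$ because each subspread is'' lives on the non-wellfounded perfect tree and does not terminate by itself. The remedy I expect to need is to fold the verification into the fusion, carrying it out simultaneously with the construction of $T$: at stage $n$ one processes the finitely many current splitting nodes together with a bookkept family of requirements $h$, using the finite branching from the Sacks property to keep the number of values $g_j$ finite at each step, and the fact that $\dot X\restr\ell_n$ is decided by level $n$ to ensure that the finite-union closure of $\id(\calA)$ propagates upward through the levels rather than regressing. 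Here the hypothesis CH enters, furnishing an enumeration of $\FF(\calA)$ of length $\omega_1$ against which the density requirements are organised; the delicate point is reconciling this length with the $\omega$-length fusion, so that the naive (and impossible) demand of meeting all $\aleph_1$ requirements below one condition is replaced by verifying the density of $Y$ only along a cofinal, suitably closed system of extensions determined by the fusion itself.
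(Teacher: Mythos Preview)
Your reformulation in the second paragraph is exactly right: the lemma amounts to finding, below any $T_0\Vdash\dot X\in\id(\calA)$, a condition forcing that the (open dense) set $\dot D=\{h'\in\FF(\calA):\calA^{h'}\cap\dot X=\emptyset\}$ contains a ground-model dense subset of $(\FF(\calA),\supseteq)$. The paper proceeds via precisely this reformulation. Where you and the paper diverge is in how to obtain that ground-model dense set, and your approach has a genuine gap at the point you yourself flag.

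The missing idea is a preliminary reduction to a \emph{countable} subfamily. Before attempting any density argument, the paper uses one fusion to find $q\le T_0$ and a countable $\calB\subseteq\calA$ with $q\Vdash\dot X\in\id(\calB)$: at each stage one decides, for each node at the current splitting level and each bookkept $h$ in the finite family built so far, a witness $h'$ to $\calA^{h'}\cap\dot X=\emptyset$, and closes off by adding $\dom(h')$ to $\calB$. Since $\id(\calB)\subseteq\id(\calA)$ (given $g\in\FF(\calA)$, extend $g\!\restriction\!\calB$ inside $\FF(\calB)$ and then take the union with $g$), it suffices to find a ground-model dense subset of $\FF(\calB)$. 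Now $\calB$ is countable, so $\FF(\calB)$ is identified with $2^{<\omega}$, and a second, completely routine fusion---enumerate $2^{<\omega}$ as $\langle s_n\rangle$, at stage $n$ walk through the $2^n$ nodes of $\mathrm{split}_n(q)$ deciding a single $t_n\supseteq s_n$ forced into $\dot D$---produces the desired ground-model dense $K=\{t_n:n\in\omega\}$ and $Y=\bigcap_{t\in K}(\omega\setminus\calB^t)\in\id(\calA)\cap V$.

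Your attempt tries to carry out the second fusion directly against the full $\FF(\calA)$, and this is exactly why you hit the wall: there is no way to reconcile an $\omega$-length fusion with $\aleph_1$ many density requirements, and CH does nothing to help here. Your final sentence (``a cofinal, suitably closed system of extensions'') is groping toward the right object---a countable $\calB$ closed under the witnessing operation---but you have not isolated it, and without it the argument does not go through. The detour through $Y=\bigcup_{x\in[T]}X_x$ and the attempt to verify $Y\in\id(\calA)$ branch-by-branch is also unnecessarily painful; once you have countable $\calB$, the ground-model dense set falls out directly and you never need to analyse the spread of $\dot X$.
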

\begin{proof} Fix $p\in\bbQ$ such that $p\Vdash\dot{X}\in\id(\calA)$. Using a fusion sequence find $q\leq p$ and a countable $\calB\subseteq\calA$ such that $q\Vdash\dot{X}\in\id(\calB)$. Identifying $\FF(\calB)$ with $2^{<\omega}$ we get a name $\dot{D}$ for a dense open subset of $2^{<\omega}$ defined by $q\Vdash h\in \dot{D}\hbox{ iff } \calB^h\cap \dot{X}=\emptyset$.

\begin{clm*} In $V$ the set $\Delta$ of $r\in\bbQ$ such that $r\Vdash \check{K}\subseteq\dot{D}$ for some dense $K\subseteq 2^{<\omega}$ is dense below $q$. 
\end{clm*}
\begin{proof} Let $\langle s_n:n\in\omega\ra$ be an enumeration of $2^{<\omega}$. Inductively construct a fusion sequence $\langle S_n:n\in\omega \ra$ in $\bbQ$ and a sequence $\langle t_n:n\in\omega\ra$ of elements of $2^{<\omega}$ such that $t_n\supseteq s_n$ for each $n$. 
Let $S_0=q$. Suppose we have constructed $S_n$ and $t_n\supseteq s_n$ such that for every extension $R$ of $S_n$ we have $R\Vdash_{\bbQ} t_n\in\dot{D}$. Let $\langle u_j:j\in 2^n\ra$ enumerate $\hbox{split}_n(S_n)$. Consider $(S_n)_{u_0}$. Then $(S_n)_{u_0}\Vdash"\dot{D}\hbox{ is dense}$ and so there is $U_0\leq (S_n)_{u_0}$ and $t'_0\in 2^{<\omega}$ such that $t'_0\supseteq s_{n+1}$ and $U_0\Vdash t'_0\in\dot{D}$. Next, consider $(S_n)_{u_1}$ and find $t'_1\supseteq t'_0$, 
$U_1\leq (S_n)_{u_1}$ such that $U_1\Vdash t'_1\in\dot{D}$. Proceed inductively and find $t'_{2^n-1}$ extending $t'_{2^n-2}$, and a condition $U_{2^n-1}\leq (S_n)_{u_{2^n-1}}$ which forces that $t'_{2^n-1}\in\dot{D}$. Then take $t_{n+1}=t'_{2^n-1}$, 
$S_{n+1}=\bigcup\{U_j:j\in 2^n\}$ and note that for every extension $R$ of $S_{n+1}$ we have that $R\Vdash t_{n+1}\in\dot{D}$.  Finally, let $r$ be the fusion of the sequence $\langle S_n:n \in\omega\ra$ and let $K=\{t_n\}_{n\in\omega}$. Then $K$ is dense in $2^{<\omega}$ and $r\Vdash \check{K}\subseteq \dot{D}$.
\end{proof}

Then for some dense  $K\subseteq 2^{<\omega}$ we have $V[H]\vDash K\subseteq\dot{D}[H]$. Take $Y=\bigcap_{t\in K}(\omega\backslash \calA^t)$. Then $Y\in\id(\calA)\cap V$ and $V[H]\vDash \dot{X}[H]\subseteq Y$ as desired.
\end{proof}

Moreover the above property generalizes to the countable support iterations of perfect trees posets (see also~\cite{SS1}).	
	
\begin{lem}\label{ideal.generators} Assume CH. Let $\calA$ be an independent family and let $\langle \bbP_\alpha,\dot{\bbQ}_\beta:\alpha\leq\omega_2,\beta<\omega_2\ra$ be a countable support iteration of perfect trees posets. Then for every $\alpha\leq\omega_2$ the ideal $\id(\calA)^{V^\bbP_\alpha}$ is generated by $\id(\calA)\cap V$. That is, $V^{\bbP_\alpha}\vDash(\forall X\in\id(\calA)\exists Y\in\id(\calA)\cap V\hbox{ such that } X\subseteq Y)$.
\end{lem}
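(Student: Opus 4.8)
The plan is to argue by transfinite induction on $\alpha\leq\omega_2$, using Lemma~\ref{single.step.density} as the engine at successor stages and isolating the genuinely new work at limits of countable cofinality. The recurring soft tool is an absoluteness observation: since $\calA$, the poset $\FF(\calA)$ of finite partial functions, and all the Boolean combinations $\calA^h$ lie in $V$, for any $X\subseteq\omega$ the defining statement ``$\forall h\in\FF(\calA)\,\exists h'\supseteq h\ (\calA^{h'}\cap X=\emptyset)$'' quantifies only over the fixed ground-model set $\FF(\calA)$ and checks atomic facts about $X$ against ground-model data. Hence for models $V\subseteq W_0\subseteq W_1$ and any $X\in W_0\cap\calP(\omega)$ we have $X\in\id(\calA)^{W_0}$ iff $X\in\id(\calA)^{W_1}$. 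The base case $\alpha=0$ is trivial. For $\alpha=\gamma+1$, note that $\gamma<\omega_2$ gives $|\bbP_\gamma|\leq\aleph_1$, so CH holds in $V^{\bbP_\gamma}$; applying Lemma~\ref{single.step.density} inside $V^{\bbP_\gamma}$ to the iterand $\dot{\bbQ}_\gamma$ shows that $\id(\calA)^{V^{\bbP_{\gamma+1}}}$ is generated by $\id(\calA)^{V^{\bbP_\gamma}}$, and the induction hypothesis generates the latter from $\id(\calA)\cap V$; composing the two containments settles $\gamma+1$.

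Suppose next that $\alpha$ is a limit with $\cf(\alpha)>\omega$; this includes the top stage $\alpha=\omega_2$, where CH fails but is not needed. Because supports are countable and every countable subset of $\alpha$ is bounded, $\bbP_\alpha$ is the direct limit $\bigcup_{\beta<\alpha}\bbP_\beta$, and the standard properness/elementary-submodel argument shows that no reals are added at stage $\alpha$: every $X\in V^{\bbP_\alpha}\cap 2^\omega$ already lies in $V^{\bbP_\beta}$ for some $\beta<\alpha$. Combined with the absoluteness observation, any $X\in\id(\calA)^{V^{\bbP_\alpha}}$ lies in $\id(\calA)^{V^{\bbP_\beta}}$, and the induction hypothesis supplies $Y\in\id(\calA)\cap V$ with $X\subseteq Y$.

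The main obstacle is the case $\cf(\alpha)=\omega$, say $\alpha=\sup_n\alpha_n$, where genuinely new reals may appear at stage $\alpha$, so no reduction to an earlier model is available and a fusion must be run directly in $\bbP_\alpha$. Given $p\forces\dot X\in\id(\calA)$, I would proceed in two stages, both carried out in $V$ via the iterated Axiom~A fusion for countable support iterations of perfect tree posets (which are proper, $^\omega\omega$-bounding and have the Sacks property). First, reduce to a countable subfamily: build $\calB=\bigcup_n\calB_n\subseteq\calA$ together with a fusion sequence so that at stage $n$, for each of the countably many $h\in\FF(\calB_n)$, the current condition is strengthened to decide a ground-model extension $h'\supseteq h$ forcing $\calA^{h'}\cap\dot X=\emptyset$, whose finitely many coordinates are thrown into $\calB_{n+1}$; the fusion $q\leq p$ then forces $\dot X\in\id(\calB)$, and $\calB\in V$ is countable since the whole construction is a ground-model recursion. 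Identifying $\FF(\calB)$ with $2^{<\omega}$, the set $\dot D=\{h:\calB^h\cap\dot X=\emptyset\}$ is forced to be dense open. Second, repeat the fusion of the Claim in Lemma~\ref{single.step.density}, but now along a tower $M_0\in M_1\in\cdots$ of countable elementary submodels with $\sup_n(M_n\cap\alpha)=\alpha$, producing an $(M_n,\bbP_\alpha)$-generic fusion condition $r\leq q$ together with a ground-model dense $K\subseteq\FF(\calB)$ such that $r\forces\check K\subseteq\dot D$; the Sacks property (pure decision on splitting nodes) is precisely what lets the dovetailing over $\split_n$ in the single-step proof survive the passage to the iteration. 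Finally, as in Lemma~\ref{single.step.density}, set $Y=\bigcap_{h\in K}(\omega\setminus\calB^h)$: density of $K$ gives $Y\in\id(\calA)\cap V$ (given $g\in\FF(\calA)$, extend $g\restr\calB$ to some $h\in K$ and take $g'=g\cup h$, so that $\calA^{g'}\subseteq\calB^{h}$ and $\calA^{g'}\cap Y=\emptyset$), while $r\forces\dot X\subseteq Y$. The hard part, and the only place the single-step argument does not transfer verbatim, is maintaining one fusion condition that is simultaneously generic over the tower $\langle M_n\rangle$ and still decides $K$ coordinatewise inside $\dot D$; this is exactly the content of the countable support fusion for perfect tree posets, and it is where properness together with the Sacks property of the iteration is used.
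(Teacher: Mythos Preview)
Your proposal is correct and follows essentially the same strategy as the paper: induction on $\alpha$, invoking the single-step Lemma~\ref{single.step.density} at successors and a generalized fusion at limits to produce a ground-model dense $K\subseteq\FF(\calB)$ inside the name $\dot D$. The paper's proof is extremely terse (it reduces the lemma to the auxiliary claim that every dense subset of $2^{<\omega}$ in the extension contains a ground-model dense set, and dispatches the limit case with the phrase ``use a modification of the same argument to a generalized fusion sequence''), whereas you spell out the cofinality case split and the tower-of-models bookkeeping explicitly; the content is the same.
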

\begin{proof}
The proof is a straightforward corollary to the following more general statement:

\begin{clm*} Let $\langle \bbP_\alpha,\dot{\bbQ}_\beta: \alpha\leq\omega_2,\beta<\omega_2\ra$ be a countable support iteration of perfect trees posets. Then every dense subset of $2^{<\omega}$ in $V^{\bbP_{\omega_2}}$ is contained in a dense subset of $2^{<\omega}$ from the ground model. 
\end{clm*}
\begin{proof} Proceed by recursion on the length of the iteration. At successor stages use the argument of Lemma~\ref{single.step.density}. At limit stages, use a modification of the same argument to a generalized fusion sequence. 
\end{proof}~\end{proof}

\section{Selective Independence}

Recall the following definition:

\begin{defi} Let $\calF\subseteq [\omega]^{\leq\omega}$ be a filter. Then $\calF$ is said to be:  
	\begin{enumerate}
		\item a $p$-set if for each countable $\calF_0\subseteq\calF$ there is $C\in\calF$ such that $\forall F\in \calF_0(C\subseteq^* F)$.
		\item a $q$-set if for each finite partition $\calE$ of $\omega$ there is $C\in\calF$ such that $\forall E\in\calE(|E\cap C|\leq 1)$\footnote{A finite partition is a partition into finite sets. The set $C$ is called a semi-selector for $\calE$.}.
	\end{enumerate}
\end{defi}

\begin{defi}\label{selective.independence.def} An independent family $\calA$ is said to be selective if it is densely maximal and $\fil(\calA)$ is both a $p$-set, and a $q$-set.
\end{defi}

Selective independent families exist under CH (see~\cite{SS1, VFDM1}). Our results will show that their existence is consistent with the negation of CH.  We obtain the following preservation property of selective independent families.

\begin{thm}\label{preservation.limit} Let $\calA$ be a selective  independent family and let $\langle \bbP_\alpha,\dot{\bbQ}_\beta:\alpha\leq \delta,\beta<\delta\ra$ be a countable support iteration of proper $^\omega\omega$-bounding posets such that for each $\alpha<\delta$, $$V^{\bbP_\alpha}\vDash\calA\hbox{ is selective}.$$ Then $\calA$ remains selective in $V^{\bbP_\delta}$.
\end{thm}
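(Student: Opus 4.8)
The plan is to prove that the three defining properties of selectivity --- dense maximality, the $p$-set property of $\fil(\calA)$, and the $q$-set property of $\fil(\calA)$ --- are each preserved at the limit stage $\delta$ of the iteration, given that they hold at every $\alpha<\delta$. The key structural input is that each iterand is proper and $^\omega\omega$-bounding, so the whole iteration $\bbP_\delta$ has these properties as well; moreover, since the perfect tree posets relevant to our applications satisfy Axiom A and hence the Sacks property, I expect to use that new reals are ``trapped'' by ground-model objects in the sense quantified by Lemma~\ref{ideal.generators}. The guiding principle is that selectivity is equivalent to a combination of (a) a control on the density ideal and (b) a diagonalization/genericity statement about $\fil(\calA)$, both of which are first-order over initial segments of the iteration.

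First I would handle dense maximality, using characterization $(3)$ of Lemma~\ref{dense.max.equiv}: I must show that in $V^{\bbP_\delta}$, every $X\notin\fil(\calA)$ admits $h\in\FF(\calA)$ with $X\subseteq\omega\setminus\calA^h$. The crucial observation is that a new real $X$ fails to lie in $\fil(\calA)^{V^{\bbP_\delta}}$ precisely when its complement is not in the density ideal, and here Lemma~\ref{ideal.generators} applies: $\id(\calA)^{V^{\bbP_\delta}}$ is generated by $\id(\calA)\cap V$. Thus if $\omega\setminus X\notin\id(\calA)^{V^{\bbP_\delta}}$, it is not below any ground-model member of $\id(\calA)$, and a reflection/absoluteness argument combined with dense maximality in $V$ (or in an appropriate intermediate model) should produce the required finite partial function $h$. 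The passage from ``not in the ideal'' to ``the relevant set is positive'' is where the equivalence of the three formulations in Lemma~\ref{dense.max.equiv} does most of the work.

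Next I would address the $p$-set and $q$-set properties. For the $p$-set property, given a countable $\calF_0\subseteq\fil(\calA)^{V^{\bbP_\delta}}$, properness gives that $\calF_0$ appears in $V^{\bbP_\alpha}$ for some $\alpha<\delta$ (a countable set of reals is captured at an intermediate stage by a standard reflection argument using properness and the length being of uncountable cofinality), where by hypothesis $\fil(\calA)$ is still a $p$-set, yielding a pseudo-intersection $C$; I then must argue $C$ remains in $\fil(\calA)$ at stage $\delta$, which again reduces to the ideal-generation Lemma. The $q$-set property is similar but uses $^\omega\omega$-bounding in an essential way: given a finite partition $\calE$ in $V^{\bbP_\delta}$, bounding lets me dominate the growth of the interval partition by a ground-model function, pulling the relevant instance back to a stage $\alpha<\delta$ where a semi-selector $C\in\fil(\calA)$ exists, and preservation of $C$ in $\fil(\calA)$ at stage $\delta$ is once more handled by Lemma~\ref{ideal.generators}.

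The hard part will be the reflection step at limits of cofinality $\omega$, where a real added at stage $\delta$ need not belong to any proper initial segment; here the generalized-fusion argument underlying the Claim in Lemma~\ref{ideal.generators} must be upgraded so that the witnessing $h$ (for dense maximality) or the witnessing $C$ (for the $p$- and $q$-set properties) can still be located, using that the Sacks-property-style fusion ties every new real to a ground-model ``slalom'' or dense set. I expect the genuine technical obstacle to be verifying that a single fusion condition simultaneously decides the membership statements $\calA^{h'}\subseteq Y$ and the pseudo-intersection/semi-selector relations uniformly across the tree of conditions, so that the limit fusion forces the desired ground-model object to work; the successor-stage analogues are comparatively routine given Lemma~\ref{single.step.density}.
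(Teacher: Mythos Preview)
Your handling of the $q$-set property is essentially right and matches the paper: ${}^\omega\omega$-bounding lets you dominate any new interval partition by one in $V$, and a ground-model semi-selector works. Your $p$-set argument via reflection to an intermediate stage is more roundabout than the paper's (which simply uses that $\fil(\calA)$ is generated by $\fil(\calA)\cap V$), but is not wrong in spirit.

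The real gap is in your treatment of dense maximality. You propose to use characterization~(3) of Lemma~\ref{dense.max.equiv} together with Lemma~\ref{ideal.generators}, arguing that if $\omega\setminus X\notin\id(\calA)^{V^{\bbP_\delta}}$ then it is not below any ground-model ideal set, and that ``reflection/absoluteness'' then produces $h$ with $\calA^h\subseteq\omega\setminus X$. This step does not go through. Ideal generation controls sets that \emph{are} in the ideal; it tells you nothing useful about a new positive set $\omega\setminus X$. What you must show is that every $\id(\calA)$-positive set in $V^{\bbP_\delta}$ contains some $\calA^h$, and for a genuinely new real at a limit of cofinality $\omega$ there is no intermediate model to reflect to and no absoluteness argument that manufactures $h$. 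This is precisely the nontrivial content, and it is why selectivity of $\fil(\calA)$ (not just dense maximality) is in the hypothesis.

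The paper does not try to derive dense maximality from ideal generation. Instead it invokes \cite[Lemma~3.2]{SS1}, a preservation theorem of Shelah: if $\calF$ is a Ramsey ($p$- and $q$-) filter and $\calH$ is a family such that (in the ground model) every $X\notin\calF$ is covered by some member of $\calH$, then this covering property is preserved at limits of countable support iterations of proper ${}^\omega\omega$-bounding forcings. Applied with $\calF=\fil(\calA)$ and $\calH=\{\omega\setminus\calA^h:h\in\FF(\calA)\}$, this gives dense maximality at stage $\delta$ directly. The Ramsey property of $\fil(\calA)$ is used essentially in Shelah's argument, so your plan would need to replicate that work, not sidestep it via Lemma~\ref{ideal.generators}. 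Note also that Lemma~\ref{ideal.generators} is only proved for perfect tree posets, whereas the present theorem is stated for arbitrary proper ${}^\omega\omega$-bounding iterands; you should not invoke it here.
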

\begin{proof} 
Since in $V^{\mathbb{P}_{\omega_2}}$, $\id(\calA)$ is generated by $\id(\calA)\cap V$ we have that $\fil(\calA)$ is generated by $\fil(\calA)\cap V$. Thus, $\fil(\calA)$ is a $p$-set. Moreover, since the poset $\bbP_{\omega_2}$ is $^\omega\omega$-bounding, every partition $\calE'$ of $\omega$ into finite sets in $V^{\mathbb{P}_{\omega_2}}$ is dominated by a ground model finite partition $\calE$ of $\omega$. But then a semi-selector for $\calE$ (in $V$) is a semi-selector for $\calE'$. Thus $\fil(\calA)$ remains a $q$-set (in $V^{\bbP_{\omega_2}}$). The fact that $\calA$ remains densely maximal in~$V^{\bbP_{\omega_2}}$ follows from  Lemma~\ref{dense.max.equiv}.(3) and~\cite[Lemma 3.2]{SS1} applied to $\calF=\fil(\calA)$ and $\calH=\{\omega\backslash \calA^h: h\in\FF(\calA)\}$. 
\end{proof}

\begin{thm}\label{preservation.thm} Assume CH. Let $\calA$ be a selective independent family and  $\langle \bbP_\alpha,\dot{\bbQ}_\beta:\alpha\leq\omega_2,\beta<\omega_2\ra$ be a countable support iteration of perfect trees posets. Then $\calA$ is selective in $V^{\bbP_{\omega_2}}$.
\end{thm}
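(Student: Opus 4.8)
The plan is to derive Theorem~\ref{preservation.thm} as a special case of the general limit preservation result, Theorem~\ref{preservation.limit}, combined with the ideal-generation lemma for iterations, Lemma~\ref{ideal.generators}. The key observation is that Theorem~\ref{preservation.limit} requires as a hypothesis that $\calA$ be selective in each intermediate model $V^{\bbP_\alpha}$, whereas Theorem~\ref{preservation.thm} only assumes $\calA$ is selective in the ground model. So the real content of the present theorem is to run a simultaneous induction on $\alpha\leq\omega_2$ establishing that $\calA$ is selective in every $V^{\bbP_\alpha}$, so that the hypothesis of the limit theorem is met everywhere and can be invoked at limit stages.

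Concretely, I would prove by transfinite recursion on $\alpha\leq\omega_2$ the statement $V^{\bbP_\alpha}\vDash\calA\hbox{ is selective}$. The base case $\alpha=0$ is the hypothesis (CH in the ground model gives a selective family). For the successor step, passing from $V^{\bbP_\alpha}$ to $V^{\bbP_{\alpha+1}}=V^{\bbP_\alpha}[\dot{\bbQ}_\alpha]$, I would verify the three defining properties of selectivity directly using that $\dot{\bbQ}_\alpha$ is a perfect tree poset. The $p$-set property of $\fil(\calA)$ follows from Lemma~\ref{single.step.density}: the density ideal in the extension is generated by the ground-model density ideal, hence the dual filter is generated by the old filter and remains a $p$-set. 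The $q$-set property uses that perfect tree posets are $^\omega\omega$-bounding, so every finite partition in the extension is dominated by an old one and any old semi-selector still works. Dense maximality in the one-step extension is preserved by the same combinatorial argument (Lemma~\ref{dense.max.equiv}.(3) together with \cite[Lemma 3.2]{SS1}) that appears in the proof of Theorem~\ref{preservation.limit}.

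For the limit step, I would split into two cases. At a limit $\alpha$, the inductive hypothesis guarantees that $\calA$ is selective in $V^{\bbP_\beta}$ for every $\beta<\alpha$; this is precisely the hypothesis of Theorem~\ref{preservation.limit} applied to the iteration up to $\alpha$ (which is a countable support iteration of proper $^\omega\omega$-bounding posets, since perfect tree posets are such). Hence Theorem~\ref{preservation.limit} yields that $\calA$ remains selective in $V^{\bbP_\alpha}$. The crucial ingredient that makes the limit argument of Theorem~\ref{preservation.limit} applicable is Lemma~\ref{ideal.generators}, which ensures that $\id(\calA)^{V^{\bbP_\alpha}}$ is generated by $\id(\calA)\cap V$ even at limits; this is what drives the $p$-set preservation and, via the generating dense sets, the dense maximality.

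The main obstacle I expect is not the successor or limit bookkeeping but ensuring that the hypotheses of Theorem~\ref{preservation.limit} are genuinely satisfied at each limit, namely that selectivity holds cofinally below the limit rather than merely in the ground model; this is exactly what the simultaneous induction is designed to secure. A secondary subtlety is that dense maximality preservation, unlike the $p$- and $q$-set properties, is not purely a consequence of $^\omega\omega$-boundedness but relies on the ideal-generation property specific to perfect tree iterations (Lemma~\ref{ideal.generators}), so one must be careful that this lemma—proved for the full iteration of length $\omega_2$—applies verbatim to each initial segment $\bbP_\alpha$, which it does since any initial segment is itself a countable support iteration of perfect tree posets. Once the induction closes, the case $\alpha=\omega_2$ gives the statement of the theorem.
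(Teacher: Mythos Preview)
Your overall inductive scheme is correct and matches the paper: one proves by induction on $\alpha\leq\omega_2$ that $\calA$ is selective in $V^{\bbP_\alpha}$, invoking Theorem~\ref{preservation.limit} at limit stages. The $p$-set and $q$-set parts of the successor step are also fine.

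The gap is in your treatment of dense maximality at the successor step. You write that dense maximality in the one-step extension ``is preserved by the same combinatorial argument (Lemma~\ref{dense.max.equiv}.(3) together with \cite[Lemma 3.2]{SS1}) that appears in the proof of Theorem~\ref{preservation.limit}.'' This is precisely what does \emph{not} work. The citation to \cite[Lemma 3.2]{SS1} in Theorem~\ref{preservation.limit} is a limit preservation lemma; it presupposes that dense maximality already holds in every $V^{\bbP_\beta}$ for $\beta<\delta$ and concludes it at $\delta$. It says nothing about passing from $V^{\bbP_\alpha}$ to $V^{\bbP_{\alpha+1}}$. The paper states this explicitly at the outset of its proof: ``It remains to show the preservation of the dense maximality of $\calA$ at successor stages,'' and then devotes essentially the entire proof to that step.

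The missing argument is substantial and is where the perfect-tree hypothesis (Axiom~A, fusion) and the selectivity of $\calA$ are genuinely used together. Given a name $\tilde X$ violating dense maximality below $\calA^h$, one defines in $V_\alpha$ the approximants $Y_l=\{m:\forall t\in\mathrm{split}_l(p)\ p_t\not\Vdash m\notin\tilde X\}$, applies the inductive hypothesis $(\ast)_\alpha$ to each $Y_l$ to get $\calA^h\setminus Y_l$ covered by some $B_l\in\id(\calA)\cap V$, uses the $p$-set property of $\fil(\calA)$ to find a single $C\in\fil(\calA)\cap V$ almost contained in each $Y_l\cup(\omega\setminus\calA^h)$, uses $^\omega\omega$-bounding and the $q$-set property to select a diagonal set $D\in\fil(\calA)\cap V$, and finally builds a fusion sequence below $p$ forcing $D\subseteq\tilde X$, contradicting the assumed properties of $\tilde X$. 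None of this is supplied by Lemma~\ref{dense.max.equiv}.(3) or \cite[Lemma 3.2]{SS1}; it is the core new content of Theorem~\ref{preservation.thm}, and your proposal omits it.
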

\begin{proof}
It remains to show the preservation of the dense maximality of $\calA$  $V^{\bbP_{\omega_2}}$ at successor stages of the iteration. For each $\alpha\leq\omega_2$ let $V_\alpha=V^{\mathbb{P}_\alpha}$. Consider the property: 

\bigskip		
\noindent
$(\ast)_{\alpha}$ For all $h\in\FF(\calA)$ and all $X\subseteq\calA^h\cap V_\alpha$ either there is $B\in\id(\calA)\cap V$ such that  $\calA^{h'}\backslash X\subseteq B$, or there is $h'\in\FF(\calA)$ such that $h'\supseteq h$ and $\calA^{h'}\subseteq \calA^h\backslash X$. 

\bigskip
\noindent
By induction on $\alpha\leq\omega_2$, we will show that $V_\alpha\vDash (\ast)_\alpha$. This is sufficient, since one of the equivalent characterisations of dense maximality established in Lemma~\ref{dense.max.equiv} is:

\bigskip
\noindent
$(\ast)$ For all $h\in\FF(\calA)$ and all $X\subseteq\calA^h$ either there is $B\in\id(\calA)$ such that $\calA^h\backslash X\subseteq B$ or there is $h'\in\FF(\calA)$ such that $h'\supseteq h$ and $\calA^{h'}\subseteq\calA^h\backslash X$. 

\bigskip
\noindent
and moreover:

\begin{clm*}
For each $\alpha\leq\omega_2$, $V^{\mathbb{P}_\alpha}\vDash ((\ast)_{\alpha}\Leftrightarrow (\ast))$.
\end{clm*}
\begin{proof}
Fix $\alpha$ and work in $V^{\mathbb{P}_{\alpha}}$. Suppose $\neg (\ast)$. Thus, there are $h\in\FF(\calA)$ and $X\subseteq \calA^h$ such that for all $B\in\id(\calA) (=\id(\calA)^{V_\alpha})$, $\calA^h\backslash X\not\subseteq B$ and for all $h'\in\FF(\calA)$ such that $h'\supseteq h$ we have that $\calA^{h'}\not\subseteq \calA^h\backslash X$. Then clearly, for each $B\in\id(\calA)\cap V$, $\calA^h\backslash X\not\subseteq B$, since $\id(\calA)\cap V\subseteq\id(\calA)$ and for all $h'\supseteq h$ in $\FF(\calA)$, $\calA^{h'}\not\subseteq\calA^h\backslash X$. That is, $V^{\mathbb{P}_{\omega_2}}\vDash \neg(\ast)_{\omega_2}$. Now, suppose 
$V_\alpha\vDash (\ast)$. Fix $X\subseteq\calA^h$ as in $(\ast_\alpha)$. 
If $B\in\id(\calA)^{V_\alpha}$ is a witness to $(\ast)$, then any $B'\in\id(\calA)\cap V$ such that $B\subseteq B'$ is a witness to $(\ast)_\alpha$. Moreover, by Lemma~\ref{ideal.generators} there is such a $B'$.
\end{proof}

\bigskip
\noindent
Note that $(\ast)_0$ holds, since it is just saying that the independent family $\calA$ is densely maximal. Proceed by induction. Suppose we have established $(\ast)_\alpha$ and $(\ast)_{\alpha+1}$ does
not hold. Thus in $V_{\alpha+1}$ there is $h\in\FF(\calA)$ and 
$X\subseteq \calA^h$ such that for every
$B\in\id(\calA)\cap V$, $\calA^h\backslash X\not\subseteq B$ and for all $h'\in\FF(\calA)$  extending $h$ the intersection $\calA^{h'}\cap X$ is non-empty. Fix a $\bbP_{\alpha+1}$-name $\dot{X}$ for a subset of $\kappa$
and a condition $\bar{p}=(\bar{\bar{p}},\dot{p})\in\bbP_{\alpha+1}$ such that 
\begin{enumerate}
	\item $\bar{p}\Vdash \dot{X}\subseteq\calA^h$,
	\item $\bar{p}\Vdash \forall B\in\id(\calA)\cap V\hbox{ we have } (\calA^h\backslash \dot{X}\not\subseteq\check{B})$, and
	\item $\bar{p}\Vdash \forall h'\in\FF(\calA)\hbox{ s.t. }h'\supseteq h\hbox{ the set }\calA^{h'}\cap\dot{X}\hbox{ is not empty}$.
\end{enumerate}

\noindent
Let $G_\alpha$ be a $\mathbb{P}_\alpha$-generic filter such that $\bar{\bar{p}}\in G_\alpha$, $\tilde{X} =\dot{X}/G_\alpha$ and $p=\dot{p}[G_\alpha]$. Then in $V_\alpha=V[G_\alpha]$

\begin{enumerate}
	\item $p\Vdash_{\bbQ_\alpha} \tilde{X}\subseteq\calA^h$,
	\item $p\Vdash_{\bbQ_\alpha} \forall B\in(\id(\calA)\cap V\hbox{ we have } (\calA^h\backslash \tilde{X}\not\subseteq\check{B})$, and
	\item $p\Vdash_{\bbQ_\alpha} \forall h'\in\FF(\calA)\hbox{ s.t. }h'\supseteq h\hbox{ the set }\calA^{h'}\cap\tilde{X}\hbox{ is not empty}$.
\end{enumerate}

\noindent
Now, in $V_\alpha$ for each $l\in\omega$ define 
$$Y_l=\{m\in\omega: \forall t\in\hbox{split}_l(p) p_t\not\Vdash_{\bbQ_\alpha} \check{m}\notin\tilde{X}\}.$$

\begin{clm} For all $l\in\omega$, $V_\alpha\vDash (p\Vdash \tilde{X}\subseteq\check{Y}_l)\wedge (Y_l\subseteq \calA^h)$.
\end{clm}
\begin{proof}
	Fix $l\in\omega$. Take any $m\in\omega$ such that $p\Vdash\check{m}\in\tilde{X}$. Pick any $t\in\hbox{split}_l(p)$ and any $q\leq p_t$. Then $q\leq p$ and $q\Vdash \check{m}\in\tilde{X}$.  Therefore $m\in Y_l$ and so $p\Vdash\tilde{X}\subseteq \check{Y}_l$.
	
	To see that $V_\alpha\vDash Y_l\subseteq \calA^h$, consider any $m\notin\calA^h$. Since $p\Vdash\tilde{X}\subseteq\calA^h$. we must have that $p\Vdash \check{m}\notin\tilde{X}$. Then for any $t\in\hbox{split}_l(p)$, $p_t\leq p$ and $p_t\Vdash\check{m}\notin\tilde{X}$. Thus for all $q\leq p_t$, $q\Vdash\check{m}\notin\tilde{X}$ and so $m\notin Y_l$. Thus $Y_l\subseteq \calA^h$. 
\end{proof}	

Since $Y_l\in V_\alpha$ we can apply $(\ast)_\alpha$ to $\calA^h\backslash Y_l$. Thus in $V_\alpha$ either

\medskip
$(\alpha)_l$ $\exists B_l\in \id(\calA)\cap V$ such that    $\calA^h\backslash Y_l\subseteq B_l$, or

$(\beta)_l$ $\exists h'\supseteq h$ in $\FF(\calA)$ such that $\calA^{h'}\cap Y_l=\emptyset$.

\begin{clm}
	For all $l\in\omega$ property $(\alpha)_l$ holds.
\end{clm}	
\begin{proof}
	Fix $l\in\omega$ and suppose $(\beta)_l$ holds. Thus, there is $h'\in\FF(\calA)$ such that $h'\supseteq h$ and  $\calA^{h'}\cap Y_l=\emptyset$. Since  $V_\alpha\vDash (p\Vdash \tilde{X}\subseteq Y_l)$ we get $p\Vdash\tilde{X}\cap\calA^{h'}=\emptyset$, which is a contradiction to item $(3)$ in the properties of $p$. 
\end{proof}	

Thus for every $l\in\omega$ there is $B_l\in\id(\calA)\cap V$ such that $\calA^h\backslash Y_l\subseteq B_l$. Then for each $l$, the set $\calA^h\backslash Y_l\in(\id(\calA))\cap V_\alpha$ and so $\omega\backslash (\calA^h\backslash Y_l)=\omega\backslash \calA^h\cup Y_l\in\fil^{V_\alpha}(\calA)$. However $(\fil(\calA))^{V_\alpha}$ is generated by $(\fil(\calA))^V$, which is a $p$-set since $\calA$ is selective and so there is $C\in\fil(\calA)\cap V$ such that $C\subseteq^* Y_l\cup\omega\backslash\calA^h$. 

That is, for all $l\in \omega$, the set $(C\cap \calA^h)\backslash Y_l$ is bounded and so we can find $f\in{^\omega\omega}\cap V_\alpha$ such that for each  $l\in\omega$,  $(C\cap \calA^h)\backslash Y_l\subseteq f(l)$. Since $\bbP_\alpha$ is $^\omega\omega$-bounding, we can assume that $f\in V\cap{^\omega\omega}$ and that $f$ is strictly increasing. Now, for each $\gamma\in\omega$ define $\alpha_j=\min(C\cap\calA^h)\backslash (\sup_{\gamma<j} f(\gamma)+1)$. 
Thus $\{\alpha_j\}_{j\in\omega}$ is a strictly increasing sequence contained
in $C\cap\calA^h$ with the property that if $m\in C\cap \calA^h\backslash (\alpha_j+1)$ then $m\in Y_j$. Moreover, $\{\alpha_j\}_{j\in\omega}$ determines an interval partition $\calE$ of $C\cap\calA^h$ in the ground model $V$.

Since $\fil(\calA)$ is a q-set (in $V$), there is a semi-selector  
$D\in\fil(\calA)\cap V$ for $\calE$ such that $D\subseteq C$. Thus in particular, if $D=\{e_j\}_{j\in\omega}$ in increasing order, then $e_j>\alpha_j$ for each $j$. It is enough to show that there is a condition $q\leq p$ such that 
$V_\alpha\vDash \big(q\Vdash_{\bbQ_\alpha} \check{D}\subseteq \tilde{X}\big)$. Indeed, if this is the case, then
$$V_\alpha\vDash \big(q\Vdash_{\bbQ_\alpha} \calA^h\backslash\tilde{X}\subseteq \calA^h\backslash\check{D}\subseteq \omega\backslash D\big).$$
Since $D\in\fil(\calA)\cap V$, the set $\omega\backslash D\in \id(\calA)\cap V$ and since $q\leq p$ we get a contradiction to property $(2)$ of $p$. 

It remains to find $q\leq p$ such that $V_\alpha\vDash(q\Vdash_{\bbQ_\alpha} \check{D}\subseteq\tilde{X})$. 
Inductively, construct a fusion sequence $\tau=<q_j: j\in\kappa>$ below $p$ such that $q_{j}\Vdash e_j\in\tilde{X}$ for all $j\in\omega$.  Then the fusion $q$ of $\tau$ is as desired. Proceed as follows. Let $q_0=p$. Since $e_0\in Y_0$ we can find $q_0\leq p$ such that $q_0\Vdash\check{e}_0\in\tilde{X}$. Suppose we have defined
$q_i\leq_i q_{i-1}\cdots\leq_1 q_0$ such that for all $l\leq i$, $q_l\Vdash \check{e}_l\in\tilde{X}$. Consider $e_{i+1}$. Since $e_{i+1}\in Y_{i+1}$ for each $t\in\hbox{split}_{i+1}(p)$, $p_t\not\Vdash\check{e}_{i+1}\notin\tilde{X}$. Thus for all $t\in\hbox{split}_{i+1}(p)$ there is $q_t\leq p_t$ such that $q_t\Vdash \check{e}_{i+1}\in\tilde{X}$. Take $q'_{i+1}=\bigcup_{t\in\hbox{split}_{i+1}}q_t$. Then $q'_{i+1}\leq_{i+1} p$ and $q'_{i+1}\Vdash\check{e}_{i+1}\in\tilde{X}$. Find $q_{i+1}$ a common extension of $q'_{i+1}$ and $q_i$ such that $q_{i+1}\leq_{i+1} q_i$. Then $q_{i+1}$ is as desired.

Now, if $\alpha$ is a limit and $V_\beta\Vdash (\ast)_\beta$ for each $\beta<\alpha$, then by Theorem~\ref{preservation.limit} we get 
$V_\alpha\vDash (\ast)_\alpha$.
\end{proof}

\section{Compact partitions}\label{compact.partitions}

One of the most interesting open questions regarding the independence number is the consistency of $\mathfrak{i}<\mathfrak{a}$. Relying on the above preservation theorem, we obtain the consistency of $\mathfrak{i}<\mathfrak{a}_T$ where $\mathfrak{a}_T$ is the least cardinality of a maximal almost disjoint family of finitely branching subtrees of $2^{<\omega}$. Miller~\cite{AM} showed that $\mathfrak{a}_T$ is the least cardinality of a partition of $\omega^\omega$ into compact sets. Recall that:

\begin{defi} A set $A$ which is contained in $[p]$ for some perfect subtree $p$ of $2^{<\omega}$ is nowhere dense in $[p]$ if for every $s\in p$ there is $t\in p$ such that $s\subseteq t$ and 
$\{f\in 2^\omega: t\subseteq f\}\cap A=\emptyset$. 
\end{defi}

To obtain the desired consistency result, we will use the following forcing notion:

\begin{defi}(Miller~\cite{AM}) Let $\calC=\{ C_\alpha\}_{\alpha\in\omega_1}$ be a partition of $2^\omega$ into compact sets. Then $\mathbb{P}(\mathcal{C})$ is the poset of all perfect trees $p\subseteq 2^{<\omega}$ such that for each $\alpha\in\omega_1$ the set $C_\alpha\cap [p]$ is nowhere dense in $[p]$. The tree $p$ is stronger than $q$, if $p$ is a subtree of $q$. 
\end{defi}

The poset $\mathbb{P}(\mathcal{C})$ satisfies Axiom A and is selective good. It is $^\omega\omega$-bounding and has the Sacks property (see~\cite{AM, OS}). Moreover, in the $\mathbb{P}(\mathcal{C})$-generic extension, $\mathcal{C}$ is no longer a partition of $2^\omega$. 

\begin{thm} Assume CH. There is a cardinals preserving generic extension in which $$\mathfrak{i}=\omega_1<\mathfrak{a}_T=\omega_2.$$
\end{thm}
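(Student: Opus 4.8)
The plan is to iterate the Miller posets $\mathbb{P}(\mathcal{C})$ for various compact partitions $\mathcal{C}$ in a countable support iteration of length $\omega_2$, while simultaneously preserving a single selective independent family $\calA$ of size $\aleph_1$ from the ground model. First I would fix, using CH in the ground model, a selective independent family $\calA$ (these exist under CH by~\cite{SS1, VFDM1}). The iteration $\langle \bbP_\alpha,\dot{\bbQ}_\beta:\alpha\leq\omega_2,\beta<\omega_2\ra$ will have each iterand $\dot{\bbQ}_\beta$ be a Miller poset $\mathbb{P}(\dot{\mathcal{C}}_\beta)$ associated with some compact partition $\dot{\mathcal{C}}_\beta$ of $2^\omega$ living in the intermediate model $V_\beta$. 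The purpose of the iteration is to destroy all potential maximal almost disjoint families of compact sets (equivalently, partitions of $\omega^\omega$ into compact sets via Miller's characterization) of size $\omega_1$, so that in the final model $\mathfrak{a}_T=\omega_2$.

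The key structural point is that each Miller poset is a perfect tree poset in the sense of the paper: by the discussion after the definition of $\mathbb{P}(\mathcal{C})$, it satisfies Axiom A, is $^\omega\omega$-bounding, and has the Sacks property, hence falls under Theorem~\ref{preservation.thm}. Therefore the whole countable support iteration is a countable support iteration of perfect trees posets, and by Theorem~\ref{preservation.thm} the family $\calA$ remains selective, in particular densely maximal, hence maximal independent, in $V^{\bbP_{\omega_2}}$. This immediately gives $\mathfrak{i}\leq |\calA|=\omega_1$, and since the iteration is proper and $^\omega\omega$-bounding it adds no dominating reals, so $\mathfrak{d}=\omega_1$; combined with the ZFC inequality $\mathfrak{d}\leq\mathfrak{i}$ cited in the introduction (from~\cite{LH}) this forces $\mathfrak{i}=\omega_1$ in the extension. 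The upper bound $\mathfrak{a}_T\leq\frac{}{}\mathfrak{c}=\omega_2$ holds since the iteration has length $\omega_2$ over a model of CH and the iterands are proper with the appropriate chain condition, making $2^{\omega}=\omega_2$ in the extension while preserving cardinals.

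For $\mathfrak{a}_T=\omega_2$ I would argue that no family of compact sets of size $\omega_1$ can be maximal. Given a name for a partition $\dot{\mathcal{C}}$ of $2^\omega$ into $\omega_1$ compact sets, by properness and the $\omega_2$-chain condition (or a standard reflection/bookkeeping argument) such a partition appears at some intermediate stage $\beta<\omega_2$; the bookkeeping is arranged so that cofinally often $\dot{\mathbb{Q}}_\beta=\mathbb{P}(\dot{\mathcal{C}}_\beta)$ for each such partition. Forcing with $\mathbb{P}(\dot{\mathcal{C}}_\beta)$ adds a perfect tree $p$ whose branches $[p]$ meet each $C_\alpha$ in a nowhere dense set, so that (as noted after the definition of $\mathbb{P}(\mathcal{C})$) $\mathcal{C}$ is no longer a partition of $2^\omega$ in $V_{\beta+1}$; equivalently the generic real escapes every piece, witnessing that the family was not maximal. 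Thus in the final model every size-$\omega_1$ candidate for a maximal compact partition has been diagonalized, yielding $\mathfrak{a}_T\geq\omega_2$, and together with the upper bound $\mathfrak{a}_T=\omega_2$.

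The main obstacle, and the step requiring the most care, is the bookkeeping-plus-reflection argument ensuring that every $\omega_1$-sized partition of $2^\omega$ into compact sets from the final extension is handled at some intermediate stage by the corresponding Miller poset. Because the iterands depend on objects appearing at intermediate stages, one must verify that a suitable enumeration of all $\mathbb{P}_{\omega_2}$-names for such partitions can be arranged so that each is caught cofinally; this uses the $\aleph_2$-chain condition of the iteration and that a compact partition is coded by an $\omega_1$-sequence of reals, which by properness and $^\omega\omega$-bounding reflects to an intermediate model. A secondary technical point is confirming that $\mathbb{P}(\dot{\mathcal{C}}_\beta)$ is genuinely a perfect tree poset \emph{in the intermediate model} $V_\beta$ (so that Theorem~\ref{preservation.thm} applies to the composite iteration), which follows from the cited properties of Miller's poset being absolute enough to hold in each $V_\beta$ where $\dot{\mathcal{C}}_\beta$ is a genuine compact partition.
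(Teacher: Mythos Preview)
Your proposal is correct and follows essentially the same approach as the paper: fix a selective independent family under CH, iterate Miller's posets $\mathbb{P}(\mathcal{C})$ with countable support and bookkeeping to kill all $\omega_1$-sized compact partitions, and invoke Theorem~\ref{preservation.thm} to keep $\calA$ maximal. Your write-up is in fact more detailed than the paper's, which simply asserts the bookkeeping yields $\mathfrak{a}_T=\omega_2$ and cites Theorem~\ref{preservation.thm} for $\mathfrak{i}=\omega_1$; your added justification of the lower bound $\mathfrak{i}\geq\omega_1$ via $\mathfrak{d}\leq\mathfrak{i}$ and of the reflection step for catching all partitions is welcome elaboration rather than a different argument.
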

\begin{proof} Let $V$ denote the ground model and let $\mathcal{A}$ be a selective independent family in $V$. Using an appropriate bookkeeping device define a countable support iteration  $\langle \mathbb{P}_\alpha,\dot{\mathbb{Q}}_\beta:\alpha\leq\omega_2,\beta<\omega\ra$ of posets such that for each $\alpha$, $\mathbb{P}_\alpha$ forces that $\mathbb{Q}_\alpha=\mathbb{P}(\mathcal{C})$ for some uncountable partition of $2^\omega$ into compact sets and such that  $V^{\mathbb{P}_{\omega_2}}\vDash\mathfrak{a}_T=\omega_2$. By Theorem~\ref{preservation.thm}, the family $\mathcal{A}$ remains maximal independent in $V^{\mathbb{P}_{\omega_2}}$ and so a witness to $\mathfrak{i}=\omega_1$.
\end{proof}

\section{Projective wellorders}

The preservation properties discussed in the previous sections generalize to certain $S$-proper iterations, which allows us to use a specific coding technique, known as coding with perfect trees and generically adjoin a $\Delta^1_3$ definable wellorder of the reals, while preserving a co-analytic maximal independent family of cardinality $\aleph_1$. Note that both, the $\Delta^1_3$ wellorder and the $\Pi^1_1$ maximal independent families are optimal in their projective complexity.

\begin{thm}\label{selective.order} The existence of a co-analytic maximal independent family of cardinality $\aleph_1$ is consistent with the existence of a $\Delta^1_3$-definable wellorder of the reals and $\mathfrak{c}=\aleph_2$.	
\end{thm}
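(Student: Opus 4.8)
The plan is to construct a countable support iteration that simultaneously accomplishes two coding tasks over a model of $\GCH$: (i) building a co-analytic maximal independent family $\calA$ of size $\aleph_1$ that is preserved throughout, and (ii) generically adjoining a $\Delta^1_3$-definable wellorder of the reals. The template for this is the method of coding with perfect trees from~\cite{VFSF} combined with the indestructibility-tower construction of~\cite{JBVFYK}. I would begin by recalling from~\cite{JBVFYK} that under $\CH$ one can recursively build a selective independent family $\calA=\{A_\xi:\xi<\omega_1\}$ whose initial segments are coded into a $\Sigma^1_2$ (hence, with the wellorder, $\Pi^1_1$) well-founded relation on the reals, so that $\calA$ is $\Pi^1_1$-definable; crucially the construction in~\cite{JBVFYK} produces a family that is selective in the sense of Definition~\ref{selective.independence.def}, not merely densely maximal.

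The iteration itself would be an $S$-proper countable support iteration of length $\omega_2$, interleaving three kinds of steps managed by a bookkeeping function: coding blocks that write bits of the eventual $\Delta^1_3$ wellorder into the generic reals (via the perfect-tree coding posets of~\cite{VFSF}), localization steps used to stabilize the definition, and — if one wishes to also control a combinatorial invariant such as in the previous section — the posets $\bbP(\calC)$. The essential point is that every poset appearing in the iteration is a perfect tree poset (satisfying Axiom A, $^\omega\omega$-bounding, with the Sacks property), so that Theorem~\ref{preservation.thm}, or rather its announced $S$-proper generalization, applies and guarantees that $\calA$ remains selective, hence densely maximal and hence maximal, in $V^{\bbP_{\omega_2}}$. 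Since $\calA$ has size $\aleph_1$ and its $\Pi^1_1$ definition is preserved by the coding (the wellorder is arranged so that the defining formula continues to pick out exactly $\calA$), this yields the co-analytic maximal independent family of cardinality $\aleph_1$, while the coding delivers $\mathfrak{c}=\aleph_2$ and the $\Delta^1_3$ wellorder.

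The two most delicate points, and where I expect the real work to lie, are the following. First, one must verify that the coding and localization posets genuinely fall within the scope of the preservation theorem: the preservation argument in Theorem~\ref{preservation.thm} leans on the fusion structure of Axiom A and on $^\omega\omega$-boundedness (used in the $q$-set argument and in replacing $f$ by a ground-model dominating function), so each forcing in the iteration must be checked to be a perfect tree poset in the precise sense of Remark~\ref{perfect.tree.poset}, and the limit-stage preservation must be reconciled with the $S$-properness of the iteration via the generalized fusion used in the Claim inside Lemma~\ref{ideal.generators}. Second, and harder, is the bookkeeping interface between the \emph{preservation} of $\calA$ and the \emph{coding} of the wellorder: the coding must not accidentally destroy dense maximality by adding a set $X\subseteq\calA^h$ diagonalizing $\calA$, and symmetrically the definable wellorder must be arranged so that no spurious real is ever added to the $\Pi^1_1$-definable set. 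Managing both requires that the coding reals be chosen to avoid the density filter $\fil(\calA)$ in the appropriate way, so that the almost-disjoint coding is compatible with the fusion arguments preserving $\calA$; this simultaneous-preservation-and-coding is the main obstacle, and I would handle it by keeping the coding apparatus and the independence apparatus on disjoint coordinates of the reals and appealing to the genericity of the Sacks-like reals to separate them.
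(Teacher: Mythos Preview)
Your overall strategy matches the paper's --- build the selective $\Pi^1_1$ independent family via the indestructibility tower of~\cite{JBVFYK}, run the $S$-proper coding iteration of~\cite{VFSF}, and appeal to the preservation theorem --- but two of your stated obstacles are misdiagnosed, and one of them, if pursued as written, would not go through.

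First, your assertion that ``every poset appearing in the iteration is a perfect tree poset'' is false, and checking it in the sense of Remark~\ref{perfect.tree.poset} will fail. The iteration of~\cite{VFSF} has three kinds of iterands: the club-shooting posets (which are $\omega$-distributive and only $S$-proper, not proper), the localization posets (proper but adding no reals), and the coding-with-perfect-trees posets. Only the last type is a perfect tree poset. The point you are missing is that the first two types \emph{add no new reals}, so preservation of dense maximality and of the density filter is trivial at those stages; only at the coding stages is the fusion argument of Theorem~\ref{preservation.thm} actually invoked. What does need to be checked is that the limit-stage preservation (Theorem~\ref{preservation.limit}) and the generation of $\id(\calA)$ by ground-model sets (Lemma~\ref{ideal.generators}) go through for countable-support iterations that are merely $S$-proper and $^\omega\omega$-bounding rather than proper; this is the genuine generalization required, and it holds because~\cite[Lemma 3.2]{SS1} carries over to that setting.

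Second, your proposed ``disjoint coordinates'' manoeuvre to stop the coding reals from diagonalizing $\calA$ is unnecessary and reflects a misunderstanding of what the preservation theorem buys. Theorem~\ref{preservation.thm} (and its $S$-proper extension) already guarantees that \emph{any} perfect tree poset preserves the selective independence of $\calA$; the coding posets are perfect tree posets, so no new $X\subseteq\calA^h$ can witness failure of dense maximality, full stop. There is no separate bookkeeping interface to manage between coding and preservation. Finally, the ground model should be $L$, not merely a model of $\GCH$: the $\Delta^1_3$ wellorder construction of~\cite{VFSF} relies on the $\diamondsuit$-style almost-disjoint coding and the canonical $L$-wellorder to make the definition absolute.
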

\begin{proof}
Work over the constructible universe $L$ and let $\mathfrak{A}=\langle (\calA_\alpha,A_\alpha):\alpha<\omega_1\ra$ be the $\Sigma^1_2$-indestructibility tower defined in~\cite[Theorem 4.11.(2)]{JBVFYK}. Then $\calA=\bigcup_{\alpha<\omega_2}$ is in fact a $\Sigma^1_2$-definable selective independent family.

Let $\bbP_{\omega_2}$ be the poset from~\cite[Section 3]{VFSF} which adjoins a $\Delta^1_3$-definable wellorder of the reals. Then in particular $\bbP_{\omega_2}=\langle \bbP_\alpha,\bbQ_\beta:\alpha\leq\omega_2,\beta<\omega_2\ra$ is a countable support iteration such that each iterand is either:
\begin{itemize}
	\item $\omega$-distributive and $S$-proper for some stationary, co-stationary subset $S$ of $\omega_1$ (these are the iterands shooting clubs through chosen stationary, co-stationary sets, see~\cite[Definition 3]{VFSF}), or
	\item proper, which does not add new reals (these are the localization iterands, see~\cite[Definition 1]{VFSF}), or
	\item  coding with perfect trees posets~\cite[Definition 2]{VFSF}
	(these are proper, $^\omega\omega$-bounding and have the Sacks property).
\end{itemize}	

Thus, in particular each iterand is $S$-proper and $^\omega\omega$-bounding.  Moreover, the proofs of Lemma~\ref{single.step.density} and Lemma~\ref{ideal.generators} generalize to coding with perfect trees and their iterations respectively.
Moreover, for each initial segment $\bbP_\alpha$ 
of the iteration, the filter $\fil(\calA)^{L^{\bbP_\alpha}}$ is generated by $\fil(\calA)\cap L$ and so remains a $p$-set. Since the iteration of $S$-proper, $^\omega\omega$-bounding posets is $S$-proper and $^\omega\omega$-bounding (see~\cite[Lemma 18]{VFSF}), $\fil(\calA)^{L^{\bbP_\alpha}}$ remains a $q$-set. The preservation of the dense maximality of $\calA$ at successor stages in which new reals are added (i.e. corresponding to coding with perfect trees) follows almost identically the proof  of Theorem~\ref{preservation.thm}, while Theorem~\ref{preservation.limit} generalizes to the iteration of $S$-proper, $^\omega\omega$-bounding posets, as the same holds for
~\cite[Lemma 3.2]{SS1}.
\end{proof}

\begin{cor}\label{definable.witnesses} The existence of a $\Delta^1_3$-wellorder of the reals is consistent with $\mathfrak{i}=\omega_1<\mathfrak{a}_T=\omega_2$. Moreover, $\mathfrak{i}$ can be witnessed by a co-analytic maximal independent family.
\end{cor}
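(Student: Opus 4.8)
The plan is to merge the iteration of Theorem~\ref{selective.order}, which over $L$ adjoins a $\Delta^1_3$-definable wellorder of the reals while preserving a co-analytic selective independent family $\calA$ of size $\aleph_1$, with the forcing $\mathbb{P}(\mathcal{C})$ of Section~\ref{compact.partitions}, which is the poset used to raise $\mathfrak{a}_T$ to $\omega_2$. The key structural observation is that $\mathbb{P}(\mathcal{C})$ is a perfect tree poset in the sense of Remark~\ref{perfect.tree.poset}: it satisfies Axiom~A, is $^\omega\omega$-bounding, has the Sacks property, and is selective good. Consequently it shares the entire preservation profile of the third class of iterands in the $S$-proper framework of~\cite{VFSF} (the coding-with-perfect-trees posets); in particular, being fully proper and $^\omega\omega$-bounding, it is $S$-proper for every stationary, co-stationary $S\subseteq\omega_1$.

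First I would set up, over $L$, a countable support iteration $\langle\bbP_\alpha,\dot\bbQ_\beta:\alpha\le\omega_2,\beta<\omega_2\ra$ whose iterands are drawn from the three classes appearing in the proof of Theorem~\ref{selective.order} (club-shooting, localization, and coding with perfect trees) together with the posets $\mathbb{P}(\mathcal{C})$ for uncountable partitions $\calC$ of $2^\omega$ into compact sets. A single bookkeeping function schedules the iterands so that, on the one hand, the coding, localization and club-shooting tasks are handled cofinally often exactly as in Theorem~\ref{selective.order}, guaranteeing that $V^{\bbP_{\omega_2}}$ carries a $\Delta^1_3$-definable wellorder; and on the other hand, every partition of $2^\omega$ into $\aleph_1$ compact sets arising at an intermediate stage is eventually destroyed by an instance of $\mathbb{P}(\mathcal{C})$, exactly as in the proof of the theorem of Section~\ref{compact.partitions}, guaranteeing $\mathfrak{a}_T=\omega_2$.

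Next I would verify that both preservation properties survive the interleaving. Since every iterand, $\mathbb{P}(\mathcal{C})$ included, is $S$-proper and $^\omega\omega$-bounding with the Sacks property, the full iteration is $S$-proper and $^\omega\omega$-bounding, so $\omega_1$ is preserved, cardinals are preserved by the standard properness argument, and $\mathfrak{c}=\aleph_2$. The decoding argument of~\cite{VFSF} uses only these preservation features of the iterands, so inserting $\mathbb{P}(\mathcal{C})$-steps does not disturb the $\Delta^1_3$-definability of the wellorder. For the independent family, the generalized preservation argument from the proof of Theorem~\ref{selective.order} applies verbatim: the extensions of Lemmas~\ref{single.step.density} and~\ref{ideal.generators} and of Theorems~\ref{preservation.limit} and~\ref{preservation.thm} to $S$-proper, $^\omega\omega$-bounding iterands cover $\mathbb{P}(\mathcal{C})$, and because $\mathbb{P}(\mathcal{C})$ is selective good, $\fil(\calA)$ remains a $p$-set and a $q$-set and $\calA$ remains densely maximal at each successor stage. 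Hence $\calA$ stays a selective, in particular maximal, independent family of size $\aleph_1$ with its $\Pi^1_1$ definition intact, witnessing $\mathfrak{i}=\omega_1$.

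Combining the two halves, $V^{\bbP_{\omega_2}}$ satisfies $\mathfrak{i}=\omega_1<\mathfrak{a}_T=\omega_2$ together with a $\Delta^1_3$-definable wellorder of the reals, with $\mathfrak{i}$ witnessed by the co-analytic family $\calA$. The main obstacle is compatibility and bookkeeping rather than combinatorics: one must check that a single bookkeeping device can simultaneously serve the coding/localization/club-shooting schedule, whose success is delicate because the club-shooting steps add no reals and must be synchronized with the stationary set used for coding, and the cofinal schedule of $\mathbb{P}(\mathcal{C})$-steps required for $\mathfrak{a}_T=\omega_2$, without the $\mathbb{P}(\mathcal{C})$-steps interfering with the $S$-properness of the iteration or with the decoding of the wellorder. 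Once it is confirmed that $\mathbb{P}(\mathcal{C})$ behaves exactly like a coding-with-perfect-trees iterand with respect to both preservation schemes, the result follows.
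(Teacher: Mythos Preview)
Your proposal is correct and follows essentially the same approach as the paper: interleave the $\mathbb{P}(\mathcal{C})$ iterands of Section~\ref{compact.partitions} into the $S$-proper iteration of Theorem~\ref{selective.order}, using a single bookkeeping to both code the wellorder and destroy all $\aleph_1$-sized compact partitions. The paper's own proof is a two-sentence sketch of exactly this idea, so your write-up is simply a more detailed elaboration of the same argument.
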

\begin{proof}
	Using the techniques of Section~\ref{compact.partitions}, modify the proof of Theorem~\ref{selective.order} to eliminate along the iteration all compact partitions of the Baire space which are of cardinality $\aleph_1$.
\end{proof}

\end{document}